\newtheorem{theorem}{Theorem}[section]
\theoremstyle{plain}
\newtheorem{Definition}[theorem]{Definition}
\newenvironment{theorem*}[1]{\medskip
                            \noindent
                            {\bf Theorem #1. }\ %
                            \begingroup \sl}
                            {\endgroup\medskip}
\newtheorem{Proposition}[theorem]{Proposition}
\newtheorem{lemma}[theorem]{Lemma}
\begin{document}

\title
{A proof of P$\neq$NP}

\author[$\mathrm{M^{\lowercase{c}}Callum}$ ]{\textbf{Rupert} $\mathbf{M^{\lowercase{c}}Callum}$ }

\begin{abstract}

\noindent This work will show that $P \neq NP$ is provable in $\textsf{PA}$ and indeed even in $\textsf{SEFA}$, an extension of $\textsf{EFA}$ which proves the totality of super-exponentiation. The opening section gives acknowledgements and the second section makes some introductory remarks. A discussion of which arithmetic theories are used is given in the third section, and an outline of the overall proof-strategy is given in the fourth section. The fifth section gives a complete presentation of every step in the argument for the purpose of proving the result in $\textsf{PA}$.

\bigskip

\noindent A lemma stated in the fourth section and proved in the fifth section outlines a construction of an arithmetically definable sequence $\{\phi_{n}:n \in \omega\}$ of $\Pi^{0}_{2}$-sentences which are each true in the standard model, which satisfy a number of properties, and of which we can say, speaking roughly, that the growth in logical strength is ``as fast as possible", manifested in the fact that the total general recursive functions whose totality is asserted by the $\Pi^{0}_{2}$-sentences in the sequence are cofinal growth-rate-wise in the set of all total general recursive functions.

\bigskip

\noindent We then develop an argument, which makes use of a sequence of sentences constructed from the preceding sequence by means of a certain somewhat involved construction, together with an application of the diagonal lemma. These sentences in this second sequence are generalisations, broadly speaking, of Hugh Woodin's ``Tower of Hanoi" construction as outlined in his essay in Chapter 18 of \cite{Woodin1998}. The argument establishes the result that it is provable in $\mathsf{PA}$ that $P \neq NP$ (careful examination of this proof seems to show that $\Sigma^0_3$-induction arithmetic is sufficient). The final section shows how to establish the stronger claim that the result $P \neq NP$ is actually provable in $\mathsf{SEFA}$.

\end{abstract}

\maketitle

\newpage

\section{Acknowledgements}

\noindent I am thankful to Henry Towsner for taking some care and patience to critique flawed earlier versions of parts of this argument. I'm also very thankful to Benoit Monin who very kindly closely examined the argument and pointed out important corrections. I'm also thankful to an anonymous reviewer of an earlier draft who identified issues with an earlier attempted proof of Proposition \ref{Step1}.

\section{Introduction}

\noindent In what follows, we will present an argument establishing that $P\neq NP$ is provable in first-order $\mathsf{PA}$, using strategies based on a diagonal-lemma construction as clarified in the abstract. The argument in the most natural form is highly non-constructive and seems to require $\Sigma^0_3$-induction arithmetic, but with some care it appears to be to possible to pull the argument down into $\mathsf{SEFA}$, as will be clarified in the final section. The following section outlines the arithmetical theories that shall be used in the course of the proof, the fourth section presents the overall shape of the proof, and the fifth section presents the detailed proof of the main result, while the final section explains how to get the argument to go through in $\mathsf{SEFA}$.

\section{The arithmetical theories that shall be used}

In this section, we shall survey the various arithmetical theories to which we shall be referring in the exposition of the main argument. At one point in the exposition of the actual argument, we shall need to state a lemma to the effect that a certain decision problem related to the syntax of first-order languages is in $NP$. Since decision problems which are candidates for being in $NP$ must be decision problems for languages in finite alphabets, we shall need to describe how we shall view first-order languages as having finite alphabets.

\bigskip

\noindent We shall do this by assuming that the variables of our first-order languages are of the form $x, x', x'', \ldots$ where $x$ and $^{\prime}$ are two symbols of the language. We shall also assume that all first-order languages have finite signature except possibly for having countably infinitely many constant symbols, but in the case where the number of constant symbols is infinite, we shall allow for constant symbols being represented by a string of symbols of our alphabet of length greater than one, and we shall always require that the alphabet is finite. The alphabet shall consist of $x$ and $^{\prime}$, and two symbols $^{*}$ and $^{\circ}$ whose usage shall be described below, and finitely many non-logical symbols and all the other standard symbols of a first-order language.

\bigskip

\noindent There is one particular example of a theory in such a first-order language to which we shall need to refer regularly throughout the argument. This theory is a modified version of $\textsf{PRA}$. We shall describe it.

\bigskip

\noindent Before proceeding to describe this theory, let us remind ourselves of the devices for quotation introduced by George Boolos in \cite{Boolos1995}. We assume that we are working in a language in some finite alphabet which includes the symbols $^{*}$ and $^{\circ}$.

\begin{Definition} \label{qmark} A $q$-mark is an expression $\{j\}^{\circ}$ where $j \geq 0$ and $\{j\}$ is the expression consisting of $j$ consecutive occurrences of $^{*}$. \end{Definition}

\begin{Definition} \label{quotation} If $\alpha$ and $\beta$ are two expressions, then we write $\alpha * \beta$ for the concatenation of $\alpha$ and $\beta$. For any expression $\alpha$, we define $m(\alpha)$, the $q$-mark of $\alpha$, to be the shortest $q$-mark that is not a subexpression of $\alpha$. The quotation of $\alpha$, denoted by $r(\alpha)$, is defined to be $m(\alpha) * \alpha * m(\alpha)$. \end{Definition}

\noindent Now let us describe our modified version of $\textsf{PRA}$. The language shall be an extension (by constant symbols) of the first-order language of arithmetic, which we shall assume to have signature $(0,S,+,*,<)$. We are making the convention, of course, that the alphabet of the language is finite, and that all variables are given by means of two fixed symbols in the manner outlined previously. Let us denote the first-order language of arithmetic as just described (not extended by constant symbols as yet) by $L_0$. We shall begin with a theory denoted by $T_0$ in the language $L_0$ just described, which is a version of $\textsf{PRA}$ in that language, which we shall assume to have a polynomial-time-computable axiom set.

\bigskip

\noindent Now let us describe an extended theory $T_1$ in an extended language $L_1$. Presently we shall be describing a sequence of languages $\langle L_n :n \in \omega \rangle$ with the property that for each $n \in \omega$ the language $L_{n+1}$ is an extension of the language $L_{n}$ by countably infinitely many constant symbols which may be represented by strings of symbols from the alphabet of length greater than one. We shall assume in what follows that some fixed system of G\"odel numbering for the union of all these languages is chosen. To return to the description of the theory $T_1$, we shall take the language $L_1$ to be the same as that of $L_0$, only with countably infinitely many constant symbols added, where we keep the size of the alphabet finite by allowing constant symbols to have length greater than one. Given any sentence $\phi$ of $L_0$ with length $n$ (containing no $q$-marks), we stipulate there is a constant symbol $c_{\phi}$ of $L_1$ of length $n+2$ which appears as the expression $r(\phi)$, as given in Definition \ref{quotation}, and an axiom $c_{\phi}=n$ is given for the theory $T_1$, where $n$ is a closed term in the language $L(0,S,+,*,<)$ for the G\"odel number of $\phi$, with the length of the closed term $n$ being bounded above by a bound proportional to the logarithm of the G\"odel number which is its referent in the standard model. Clearly by proceeding in this way we can arrange things (if the system of G\"odel numbering is chosen appropriately) so that the axiom set for the theory $T_1$ is once again polynomial-time-computable.

\bigskip

\noindent Then we can construct an extended theory $T_2$ in an extended language $L_2$ in a similar way, except that now given a sentence $\phi$ of length $n$ in the language $L_1$ with at least one $q$-mark of length 1, the corresponding constant symbol will have length $n+4$, since $q$-marks of length 2 will be needed to do the quotation. And so on. In this way we construct an infinite ascending chain of theories $\{ T_n : n \in \omega\}$. The union of all such theories, which we denote by $T_{\infty}$, is a theory in a language with finite alphabet with a polynomial-time-computable axiom set. This theory by slight abuse of notation shall also be denoted by $\textsf{PRA}$, throughout what follows, and the language of all arithmetical theories shall hence-forward be assumed to be the underlying language of this theory.

\bigskip

\noindent It may be asked why we have chosen to set up the theory $\textsf{PRA}$ in this particular way. This is because it is necessary to do so in order to guarantee, in a lemma to be stated in a later section, that the sequence of $\Pi^0_2$-sentences referred to in the statement of that lemma only have polynomial growth in their length.

\begin{Definition} Let $\{\Gamma_r :r \in \omega\}$ be the following increasing sequence of theories. $\Gamma_0$ is defined to be the deductive closure in the predicate calculus for $L(0,S,+,*)$ of the set of all consequences of the axioms of $\mathsf{Q}$ which can be formulated in $L(0,S)$ together with bounded induction for that language. $\Gamma_1$ is defined to be the deductive closure in $L(0,S,+,*)$ of Presburger arithmetic. $\Gamma_2$ is $\mathsf{Q}$ together with induction for bounded formulas. For $\Gamma_3$, extend the language to include a symbol for exponentiation and include the defining recursion equations for exponentiation, and then add bounded induction axioms for that language, and take the set of consequences of this axiom set in $L(0,S,+,*)$. Similarly with $\Gamma_4$, except that we include a symbol for super-exponentiation as well as exponentiation. And with $\Gamma_5$ we have symbols for exponentiation, super-exponentiation, and super-super-exponentiation. And so on. We previously said that all arithmetical theories would be in the language for the modified version of $\mathsf{PRA}$ with infinitely many constant symbols, some of them strings of length greater than one, so we make that modification to the sequence of theories just described so as to conform to that convention. This completes the specification of the hierarchy of theories $\{\Gamma_r :r \in\omega\}$ to which we will refer in what follows. \end{Definition}

\bigskip

\noindent In particular, with $\{\Gamma_r :r \in \omega\}$ as above, we have that the set of provably total general recursive functions of $\Gamma_r$ is equal to the $r$-th set in the Grzegorczyk hierarchy of primitive recursive functions.

\bigskip

Next we define $I\Delta_0+\{\Omega_r:r\in\omega\}$. We define $I\Delta_0$ as $\Gamma_2$ and $\Omega_r$ is defined for each $r \in \omega$ as follows. We define $\omega_0(x)=2x$ and we let $|x|$ be the least $y$ such that $2^y>x$, then we define $\omega_{i+1}(0)=0$, and for $x>0$, $\omega_{i+1}(x)=2^{\omega_i(|x|-1)}$. Then $\Omega_r$ is the assertion of the totality of the function $\omega_{r+1}$. Here we are following unpublished work of Peter Koellner and Hugh Woodin for which a citation may be found in \cite{Paris&Wilkie1987}.

\bigskip

We shall also be referring to the theories $\Sigma^0_n$-induction arithmetic for $n>0$ and $\textsf{PA}$ which have standard definitions.

\bigskip

This completes the specification of the arithmetical theories to which we shall be referring in the course of the argument.

\section{Outline of the argument}

\noindent This section and the following section shall be occupied with presenting a proof which is formalizable in $\mathsf{PA}$. This section shall give the outline of how the proof goes and the fifth section shall fill in the details. We shall need to begin by stating a lemma to the effect that a certain decision problem related to the syntax of first-order languages is in $NP$. It shall indeed become apparent over the course of the exposition of the argument that it is $NP$-complete. We shall make use of the notation introduced in the previous section.
\bigskip

\noindent We shall now state a lemma which shall give an example of a decision problem in $NP$.

\begin{lemma} \label{decisionproblem} Suppose that we are given a finite list of axioms to add to $\textsf{PRA}$ and a positive integer $n$ given in unary notation, and a particular sentence $\phi$, and the decision problem is to determine whether or not there exists a proof in $\textsf{PRA}$ plus the given list of axioms, of length at most $n$, of the sentence $\phi$. This decision problem, denoted in what follows by $\textsf{PROVABILITY}$, is in $NP$. \end{lemma}

\begin{proof} It is sufficient to show that the decision problem for determining
whether a given string of length at most $n$ is a proof in $\textsf{PRA}$ plus a fixed finite list of axioms can be solved in a number of steps which is a polynomial function of $n$. Recall that we are assuming that the axiom set of $\textsf{PRA}$ is polynomial-time computable, and it easily follows that the axiom set of $\textsf{PRA}$ with the finite list of axioms adjoined is also polynomial-time-computable. We may also assume that the logic itself is polynomial-time-computably axiomatized relative to the rule modus ponens. It is clear, then, that the decision problem for determining whether a given string is a proof in the theory $\textsf{PRA}$ plus the finite list of axioms is polynomial-time-computable, and this yields the desired result. \end{proof}

\noindent Having stated our initial lemma, let us now proceed to sketch how the rest of the proof goes, with the detailed steps to be given in the fifth section.

\bigskip

\noindent We state another lemma, to be proved in the fifth section.

\begin{lemma} \label{Lemma} Assume the axioms of $\mathsf{PA}$. On those assumptions, it is provable that there exists an arithmetically definable sequence of $\Pi^{0}_{2}$-sentences $\{\phi_{n}:n \in \omega\}$ in the language of $\textsf{PRA}$ as defined above (we allow that they may be finite conjunctions of sentences of complexity either $\Pi^0_1$ or $\Pi^0_2$, and thus provably equivalent in $\textsf{PRA}$ to some $\Pi^0_2$-sentence) with the property that

\bigskip

\noindent (1) No $q$-marks of length greater than $n$ occur in $\phi_{n}$ (and we start with $n=0$). \newline
\noindent (2) There exists a polynomial function with non-negative integer coefficients such that the length in symbols of $\phi_{n}$ is always bounded above by the value of this polynomial function at $n$, and is also strictly greater than the value of this polynomial function at $n-1$ if $n>0$; \newline
(3) $\mathsf{PRA}$+$\{\phi_{n}:n\in\omega\}$ is $\Pi^{0}_{2}$-sound and $\Pi^{0}_{1}$-complete \newline
(4) $\mathsf{PRA}$+$\phi_{n+1}$ always proves 1-consistency of $\mathsf{PRA}$+$\phi_{n}$, in a form where the G\"odel number of $\phi_{n}$ is denoted by a constant symbol with $q$-marks, and the length of the shortest proof as a function of $n$ is bounded above by a polynomial function with non-negative integer coefficients; \newline
(5) The provably total general recursive functions of $\mathsf{PRA}$+$\{\phi_{n}:n \in\omega\}$ have growth rates cofinal in the set of growth rates of all the total general recursive functions. \end{lemma}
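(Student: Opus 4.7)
The plan is to define $\phi_n$ by an arithmetic recursion so that each $\phi_n$ is the conjunction of three $\Pi^0_2$ pieces: (a) the $1$-consistency sentence $\text{1-Con}(\mathsf{PRA}+\phi_{n-1})$; (b) the cumulative conjunction $\bigwedge_{k\le n,\ \varphi_k\text{ total}}\text{Tot}(k)$; and (c) the cumulative conjunction $\bigwedge_{k\le n,\ \psi_k\text{ true}}\psi_k$, where $\psi_0,\psi_1,\ldots$ enumerates the $\Pi^0_1$ sentences by G\"odel number and $\text{Tot}(e)$ is the standard $\Pi^0_2$ totality sentence for the $e$-th Turing index. Taking $\phi_0$ to be a trivial true $\Pi^0_2$ sentence, the G\"odel number of $\phi_n$ is obtained from that of $\phi_{n-1}$ by primitive-recursive concatenation together with the arithmetic selection predicates ``$\varphi_k$ total'' ($\Pi^0_2$) and ``$\psi_k$ true'' ($\Pi^0_1$); the sequence $n\mapsto\lceil\phi_n\rceil$ is therefore arithmetically definable. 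I would carry out the whole construction at the bare G\"odel-number level of $L_0$, never invoking the quotation constants $c_\phi$ of $L_1,L_2,\ldots$, which secures property~(1) uniformly for all $n\ge 0$.

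For~(2), note that $|\psi_k|$ and $|\text{Tot}(k)|$ are each $O(\log k)$, so the two cumulative conjunctions contribute $O(n\log n)$ symbols to $\phi_n$; the $1$-consistency conjunct refers to $\phi_{n-1}$ only through its G\"odel-number code inside a fixed parameterized proof predicate for $\mathsf{PRA}$, so a straightforward induction gives $|\phi_n|=O(n\log n)$, comfortably polynomial. For~(4), $\text{1-Con}(\mathsf{PRA}+\phi_n)$ is a literal subformula of $\phi_{n+1}$, so its extraction as a theorem of $\mathsf{PRA}+\phi_{n+1}$ is a proof of length $O(|\phi_{n+1}|)$ and hence polynomial in $n$. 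For the $\Pi^0_2$-soundness half of~(3) I would work inside $\mathsf{PA}$ and prove, by $\Pi^0_2$-induction on $n$ using $\mathsf{PA}$'s partial $\Pi^0_2$ truth predicate, that each $\phi_n$ holds in $\mathbb{N}$: at the successor step the $1$-consistency conjunct holds because $\mathsf{PRA}+\phi_{n-1}$ is a true theory by the inductive hypothesis, while (b) and (c) are true by their very selection criteria. The $\Pi^0_1$-completeness half of~(3) and clause~(5) are then immediate, since every true $\psi_k$ is a conjunct of $\phi_k$ and every $\text{Tot}(e)$ with $\varphi_e$ total is a conjunct of $\phi_e$; in fact every total recursive function becomes provably total in $\mathsf{PRA}+\{\phi_n:n\in\omega\}$, which is strictly stronger than the cofinality asked for.

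The principal obstacle will be the interaction between the self-reference in~(a) and the polynomial length bound in~(2). A naive recursion that textually substitutes $\phi_{n-1}$ inside the $1$-consistency conjunct suffers a constant-factor blow-up per stage and hence exponential growth in $n$; it is essential that $\phi_{n-1}$ enter only via its G\"odel-number code inside a fixed proof predicate for $\mathsf{PRA}$, so that $|\phi_n|$ is governed solely by the numeral $\overline{n}$, a fixed overhead, and the two growing conjunctions. A secondary delicacy is that the soundness induction must be carried out inside $\mathsf{PA}$ rather than in a reflection-augmented metatheory; this should reduce to $\mathsf{PA}$'s provability of uniform $\Pi^0_2$-reflection for $\mathsf{PRA}$ together with a careful formalization of the $\Pi^0_1$ and $\Pi^0_2$ selection predicates governing which conjuncts appear in~(b) and~(c).
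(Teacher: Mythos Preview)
Your construction has a genuine gap precisely where you flag the ``principal obstacle.'' You propose that the 1-consistency conjunct refer to $\phi_{n-1}$ ``only through its G\"odel-number code inside a fixed parameterized proof predicate for $\mathsf{PRA}$, so that $|\phi_n|$ is governed solely by the numeral $\bar{n}$.'' But the map $n\mapsto\lceil\phi_{n-1}\rceil$ is not recursive under your scheme: it depends on the $\Pi^0_2$ selection ``$\varphi_k$ total'' and the $\Pi^0_1$ selection ``$\psi_k$ true'' for all $k<n$. There is therefore no $\Sigma^0_1$ graph-formula $\Gamma(n,y)$ picking out $\lceil\phi_{n-1}\rceil$ from $\bar n$ alone; if you fold the selection predicates into $\Gamma$, the conjunct $\forall y\,[\Gamma(\bar n,y)\to\text{1-Con}(\mathsf{PRA}+y)]$ is no longer $\Pi^0_2$ and $\phi_n$ ceases to be a $\Pi^0_2$-sentence. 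The alternative of writing an explicit closed $L_0$-term for $\lceil\phi_{n-1}\rceil$ incurs a multiplicative constant $c>1$ per stage (any closed term of $L(0,S,+,*)$ denoting the code of a length-$L$ string has length strictly exceeding $L$, by a crude count of terms versus the size such a code must have), which is exactly the exponential blow-up you warned against. One might try passing the $2n$ selection bits as explicit numerical parameters alongside $\bar n$ and invoking the recursion theorem; the conjunct then stays $\Pi^0_2$ and short, but the polynomial proof length demanded by clause~(4) fails, since verifying in $\mathsf{PRA}$ that the decoder outputs $\lceil\phi_n\rceil$ forces one to trace the entire tower of codes $\lceil\phi_0\rceil,\ldots,\lceil\phi_n\rceil$, and these are exponentially large numbers.

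The paper meets this difficulty by \emph{using} the quotation constants you chose to discard. The constant $c_{\phi_{n-1}}$ has length $|\phi_{n-1}|+O(n)$, the $O(n)$ paying for a $q$-mark one longer than any already present, and is equated with an efficient numeral for $\lceil\phi_{n-1}\rceil$ by a single axiom; so the 1-consistency sentence written with $c_{\phi_{n-1}}$ has length $|\phi_{n-1}|+O(n)$ with multiplicative constant exactly~$1$, the lengths telescope to a polynomial, and the proof for~(4) is just conjunct-extraction plus one axiom instance. This is why property~(1) reads ``no $q$-marks of length greater than $n$'' rather than ``no $q$-marks.'' The paper's construction is also structurally different from yours: instead of conjoining $\mathrm{Tot}(k)$ and $\psi_k$ explicitly, it \emph{selects} $\phi_n$ as a growth-rate-maximal element of the finite set of true $\Pi^0_2$-sentences of length at most $p(n-1)+2n+C$ that imply 1-consistency of $\mathsf{PRA}+\phi_{n-1}$, with a true $\Pi^0_1$-sentence appended as a second conjunct; the quotation device is precisely what guarantees this candidate set is non-empty.
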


\noindent Apart from the proof of this lemma, there are three steps that must be completed to complete the proof in $\textsf{PA}$ that $P \neq NP$. We shall describe them.

\bigskip

\noindent \textbf{Description of Step 1.} Let us suppose, for a contradiction, that $P=NP$ holds. Then there would be some algorithm for deciding $\textsf{SAT}$ in polynomial time. Consequently there would be some Turing machine $T$ and some non-negative integer $m'$ such that the $\Pi^0_1$-sentence, asserting of the Turing machine $T$ that it always correctly computed of a given input string whether or not that string was a member of $\textsf{SAT}$, in a number of steps bounded above by $p(n)$, where $n$ is the length of the input string and $p(n)$ is a polynomial function with non-negative integer coefficients and leading exponent $m'$, would hold true. Since the decision problem $\textsf{PROVABILITY}$ is polynomial-time reducible to $\textsf{SAT}$ by the Cook-Levin Theorem, there would exist some non-negative integer $k$ such that if we define $m:=km'$, then there would be some Turing machine $T'$, such that the $\Pi^0_1$-sentence, asserting of the Turing machine $T'$ that it always correctly computed of a given input string whether or not that string was a member of $\textsf{PROVABILITY}$, in a number of steps bounded above by $p'(n)$, where $n$ is the length of the input string and $p'(n)$ is a polynomial function with non-negative integer coefficients and leading exponent $m$, would hold true. Let us call such a $\Pi^0_1$-sentence, a $\Pi^0_1$-sentence witnessing the polynomial-time computability of $\textsf{PROVABILITY}$ with exponent $m$. Consequently the latter $\Pi^0_1$-sentence would be provable in $\textsf{PRA}+\phi_{n}$ for all $n \geq N$ for some fixed $N$, where $\{\phi_{n}:n \in \omega\}$ is the sequence of $\Pi^0_2$-sentences of Lemma \ref{Lemma}.

\bigskip

\noindent Now suppose that $m$ is a fixed integer such that $m \geq 2$, and suppose that we are given a quadruple $\langle \phi, \phi', P, P' \rangle$, where $\phi$ and $\phi'$ are $\Pi^0_2$-sentences (again, they may possibly be finite conjunctions of $\Pi^0_1$ and $\Pi^0_2$ sentences), and $P$ is a proof from $\textsf{PRA}+\phi'$ of the 1-consistency of $\textsf{PRA}+\phi$, and $P'$ is a proof from $\textsf{PRA}+\phi$ of a $\Pi^0_1$-sentence witnessing the polynomial-time computability of $\textsf{PROVABILITY}$ with exponent $m$. We are not assuming here that $\phi$ and $\phi'$ are actually true in the standard model, but special cases of this, which we shall consider later, may be examples where $\phi$ and $\phi'$ are $\phi_{n}$ and $\phi_{n+1}$ respectively from the sequence $\{\phi_{n}:n \in \omega\}$ of Lemma \ref{Lemma}, for fixed $n \in \omega$, and in such a case it of course follows from Lemma \ref{Lemma} that the sentences in question would be true in the standard model. However, for the time being we assume no such constraint on $\phi$ and $\phi'$, and we simply consider an arbitrary quadruple of the form given, noting that it is polynomial-time-computable whether or not a given string is such a quadruple or not. However we note that in the event that $\phi=\phi_{n}$ for some $n$ there would be a unique candidate for such an $n$ given the length of $\phi$. We shall assume that $n$ is chosen in such a way.

\bigskip

\noindent In what follows, we shall be speaking of a formula $\theta_n$ in the first-order language of arithmetic with exactly two free variables which can be proved in a certain extension of $\textsf{PRA}$ to define a total general recursive function. We shall need to have some kind of procedure for passing from $\theta_n$ to the index of a Turing machine which can be proved in the same extension of $\textsf{PRA}$ to compute the total general recursive function defined by $\theta_n$. Such a procedure can be extracted from the proof of computability of recursive functions in Section 68, Chapter XIII of \cite{Kleene1962}.

\begin{Proposition} \label{Step1} Assume that $\langle \phi, \phi', P, P' \rangle$ is a quadruple as described above and assume that $m, n$ are as above. Let us suppose that $q(n)$ is a fixed polynomial function of $n$ with non-negative integer coefficients. Let us define $S_n$ to be the set of $\Sigma^0_1$-formulas of length at most $q(n)$, with exactly two free variables, which define $\textsf{PRA}$+$\phi'$-provably total general recursive functions. Then, provided the initial polynomial function $q(n)$ is chosen appropriately, there will also always exist at least one formula $\theta_n \in S_n$, obtainable as a polynomial-time-computable function of our quadruple $\langle \phi, \phi', P, P' \rangle$, satisfying all of the following. In the case where $\phi$ and $\phi'$ are true in the standard model, as is the case when $\phi=\phi_n$ and $\phi'=\phi_{n+1}$ where $\phi_n$ and $\phi_{n+1}$ are taken from the sequence constructed in the previous lemma, then $f_n$ will be the total general recursive function corresponding to $\theta_n$. We now state what requirements $\theta_n$ must satisfy.

\bigskip

\noindent (1) It is $\textsf{PRA}+\phi'$-provable that the following holds for all $k$: there is a $\textsf{PRA}+\phi$-proof that there exists an $x$ such that $\theta_n(k,x)$. \newline
(2) The following is $\textsf{PRA}+\phi'$-provable. Denote by $r_n(k)$ the length of the shortest proof from $\textsf{PRA}+\phi$ of the existence of an $x$ such that $\theta_n(k,x)$. For all sufficiently large $k$, the actual halting time of the Turing machine $T_{n}$ for computing the function defined by $\theta_n$, with input $k$, lies between $(r_n(k))^k$ and $(r_n(k))^{3k}$. \newline

\bigskip

\noindent Furthermore, ``polynomial-time computable" in the fourth sentence of the first paragraph of the statement of our Proposition can be replaced by ``computable in $O(g_{k}(n))$ for some $k$", and simultaneously, similarly with other occurrences of ``polynomial", as for example in the second sentence of the first paragraph, where $\langle g_k :k \in \omega \rangle$ is any sequence of functions having the following set of properties: (1) The functions in the sequence should be closed under functional composition (2) they should be dominated growth-rate-wise by the exponential function (this requirement is necessary if we wish to ensure that the sequence of functions $\langle f_n :n \geq N \rangle$ corresponding in the obvious way to the subsequence $\langle \phi_{n} : n \geq N \rangle$ of the sequence $\langle \phi_{n} : n \in \omega \rangle$ of Lemma \ref{Lemma} -- where $N$ is taken as the least $N$ such that $\textsf{PRA}+\phi_{N}$ proves of some Turing machine that it witnesses the polynomial-time-solvability of $\textsf{PROVABILITY}$ with exponent $m$ -- is cofinal growth-rate wise in the set of all total general recursive functions) (3) we require that $n \mapsto \Sigma_{m=1}^{n} \Sigma_{j=1}^{k} a_j g_j(m)$, for arbitrary non-negative integer constants $a_j$, be dominated growth-rate-wise by $g_{k'}$ for some sufficiently large $k'$.

\end{Proposition}

\noindent The final paragraph of the statement of the above Proposition indicates the generality of our argument, notably indicating that our argument will imply as a final conclusion not only $P \neq NP$ but also that there are many sub-exponential functions $f(n)$ such that an $NP$-complete problem cannot be solved with run-time $O(f(n))$. However our proof will be consistent with the known fact \cite{Hertli2018} that the $NP$-complete problem $\textsf{3SAT}$ is solvable in $O((1.31)^n)$ (an important test for any attempted proof of $P \neq NP$, as was noted by Scott Aaronson).

\bigskip

\noindent We will also need to discuss why our construction does not work when relativized to an oracle, that is, it does not give rise to a proof of the result $P^{A} \neq NP^{A}$ for arbitrary oracles $A$, which is known to be false. We will return to this point briefly later at the appropriate time.

\bigskip

\noindent We have now clarified, then, the proposition that must be proved in order to complete Step 1 of the three steps that are required for our final conclusion, in addition to the proof of Lemma \ref{Lemma}. Let us proceed to the description of the next two steps.

\bigskip

\noindent \textbf{Description of Step 2.} Suppose that Step 1 has been completed, and let us suppose that thereby in the context of assuming $P=NP$ a sequence $\langle f_n :n \geq N \rangle$ of total general recursive functions is obtained (that is to say, each $f_n$ is the total general recursive function corresponding to the $\theta_n$ of the previous Proposition whose totality is a consequence of $\textsf{PRA}+\phi_{n+1}$ where $\phi_{n+1}$ is as in the first lemma, for all $n$ greater than or equal to the least $N$ such that $\textsf{PRA}+\phi_{N}$ proves of some Turing machine that it witnesses the polynomial-time-solvability of $\textsf{PROVABILITY}$ with exponent $m$). For each function $f_n$ in the sequence we can construct the ``Tower of Hanoi" sentence $\psi_n$, which asserts ``Either the Turing machine for $f_n(n)$ halts and I'm refutable from $\textsf{PRA}+\phi_{n}$ in at most $f_n(n)$ symbols, or the Turing machine for $f_n(n)$ does not halt and I'm refutable from $\textsf{PRA}+\phi_{n+1}$". We assume that $n$ is represented by a closed term of length polynomial in $\mathrm{log}_{2} \hspace{0.1 cm} n$, and that ``the Turing machine for $f_n(n)$ halts" is represented by ``$\exists r \theta_n(n,r)$". This construction can be achieved with the diagonal lemma, with the sentence $\psi_n$ being a polynomial-time computable function of the original quadruple $\langle \phi_{n}, \phi_{n+1}, P, P' \rangle$ (including in the case where $\phi_{n}$ and $\phi_{n+1}$ do not come from the sequence $\{\phi_n : n \in \omega \}$ of Lemma \ref{Lemma}, in which cases as noted we still use their length in symbols to come up with a unique value for $n$), and we call it the ``Tower of Hanoi" sentence, for reasons discussed below.

\bigskip

\noindent This construction was inspired by a similar construction of Hugh Woodin's in an essay \cite{Woodin1998} of his titled ``The Tower of Hanoi". Calling the construction by this name is motivated by the fact that Hugh Woodin used a similarly constructed sentence in that work to give an instance of a polynomial-time checkable property of a hypothetical string of bits of length $10^{24}$, such that there would be no feasible way to demonstrate that the string of length at most $10^{24}$ with the required property does not exist from ``anything in our mathematical experience", but also such that if the string were actually observed to exist then we would be able to infer that a certain Turing machine involving 2011 iterations of the exponential operator does not in fact halt, thereby having located in inconsistency in $\textsf{EFA}$. Naturally our mathematical intuitions lead us to believe that the string does not in fact exist, but this is not feasibly confirmable through ``direct mathematical experience" but only by intuition. The sentence would therefore reflect the ``Tower of Hanoi" legend in that sense, since the ``Tower of Hanoi" legend holds that if the Tower of Hanoi game with 64 disks were ever completed then the gods would step in and destroy the universe, analogous to the ``destruction of the mathematical universe" that would ensue from actually locating the bit-string. So for this reason we call it a ``Tower of Hanoi" sentence but will be using this construction for very different purposes.

\bigskip

\noindent Given that the ``Tower of Hanoi" sentences are constructed as above, we construct a partial general recursive function $g$ in the following way. We search through every possible quadruple $\langle \phi, \phi', P, P' \rangle$ of the form that we have been considering, and given such a quadruple we may construct a ``Tower of Hanoi" sentence $\psi$ corresponding to this quadruple which may be obtained as a polynomial-time function of the quadruple.

\bigskip

\noindent We now define a partial general recursive function $g$ as follows. For every quadruple as specified above with length bounded above by some polynomial function of the argument $k$ to the function $g$, we search either for a refutation from $\textsf{PRA}+\phi$ of the Tower of Hanoi sentence $\psi$, or else a proof in $\textsf{PRA}+\phi+\neg\psi$ of $\Sigma^0_r$-unsoundness, for some $r$, of $\textsf{PRA}+\phi'+\neg\psi$. We claim that this search always terminates for every tuple with the desired (polynomial-time-checkable) properties, no matter what. But, pending a demonstration of that, then of course we simply stipulate that if the search fails to terminate for some tuple corresponding to a given argument $k$ then that simply counts as a value of $k$ for which the function $g$ is undefined. Then, having performed such a search and brought it to termination for every tuple with length bounded above by the appropriate polynomial bound, we take the maximum length of all proofs found as the outcome of the search over all the tuples, and that is the value of $g(k)$ for this $k$. In this way we have clearly defined a partial general recursive function, computer code for which it is totally feasible to construct.

\begin{Proposition} \label{Step2} The function $g$ as defined above is in fact provably total in $\Sigma^0_2$-induction arithmetic. \end{Proposition}

\noindent The proof of the above proposition is what must be achieved for the completion of Step 2.

\bigskip

\noindent \textbf{Description of Step 3.} Suppose that we have completed Steps 1 and 2, and are assuming $P=NP$ for a contradiction, and suppose that we are also assuming $\Sigma^0_3$-induction so that we can assume that predicates defined with reference to our sequence of sentences $\langle \phi_n : n \in \omega \rangle$ constructed in Lemma \ref{Lemma} are such that if they are inductive then their extension is the whole set of natural numbers ($\Sigma^0_3$-induction appearing to be the amount of induction that is required for this on careful inspection of the proof).

\begin{Proposition} \label{Step3} Given the above assumptions and the previous step, the function $g$ is in fact a total general recursive function growing faster than any $f_n$ in the sequence $\langle f_n : n \geq N \rangle$ discussed at the beginning of Step 2, but this sequence of functions is cofinal growth-rate-wise in the set of total general recursive functions (as we will make it our burden to show in the completion of Step 1), and that's a contradiction. Thus we've derived a contradiction from $P=NP$ in $\Sigma^0_3$-induction arithmetic, therefore we can now conclude that $P \neq NP$ is provable in $\Sigma^0_3$-induction arithmetic and in particular in $\textsf{PA}$, and this completes the proof that $\textsf{PA}$ proves $P \neq NP$. \end{Proposition}

\noindent This then, is the general sketch of how the proof goes. So to complete the proof, we must prove Lemma \ref{Lemma}, Proposition \ref{Step1} (including the demonstration that $\langle f_n:n \geq N \rangle$ is cofinal growth-rate-wise in the set of all total general recursive functions), Proposition \ref{Step2}, and Proposition \ref{Step3}, and then the final conclusion will be that $\textsf{PA}$ proves $P \neq NP$. The presentation of the proofs of these claims will be the task of the fifth section.

\bigskip

\noindent We will also, in the course of proving these claims, need to briefly note towards the end of the proof of Proposition \ref{Step1} the reasons why our argument is not a ``relativizing" argument; it does not succeed if relativized to an oracle. Our task of outlining the overall shape of the argument is now complete.

\section{Proof of the main theorem}

\noindent We present the proof of Lemma \ref{Lemma}.

\begin{lemma} \label{Lemma_1} Assume the axioms of $\mathsf{PA}$. On those assumptions, it is provable that there exists an arithmetically definable sequence of $\Pi^{0}_{2}$-sentences $\{\phi_{n}:n \in \omega\}$ in the language of $\textsf{PRA}$ as defined above (we allow that they may be finite conjunctions of sentences of complexity either $\Pi^0_1$ or $\Pi^0_2$, and thus provably equivalent in $\textsf{PRA}$ to some $\Pi^0_2$-sentence) with the property that

\bigskip

\noindent (1) No $q$-marks of length greater than $n$ occur in $\phi_{n}$ (and we start with $n=0$). \newline
\noindent (2) There exists a polynomial function with non-negative integer coefficients such that the length in symbols of $\phi_{n}$ is always bounded above by the value of this polynomial function at $n$, and is also strictly greater than the value of this polynomial function at $n-1$ if $n>0$; \newline
(3) $\mathsf{PRA}$+$\{\phi_{n}:n\in\omega\}$ is $\Pi^{0}_{2}$-sound and $\Pi^{0}_{1}$-complete \newline
(4) $\mathsf{PRA}$+$\phi_{n+1}$ always proves 1-consistency of $\mathsf{PRA}$+$\phi_{n}$, in a form where the G\"odel number of $\phi_{n}$ is denoted by a constant symbol with $q$-marks, and the length of the shortest proof as a function of $n$ is bounded above by a polynomial function with non-negative integer coefficients; \newline
(5) The provably total general recursive functions of $\mathsf{PRA}$+$\{\phi_{n}:n \in\omega\}$ have growth rates cofinal in the set of growth rates of all the total general recursive functions. \end{lemma}

\begin{proof} Our background metatheory is $\mathsf{PA}$. Choose a positive integer $C$ such that, given a $\Pi^{0}_{2}$-sentence $\phi$ of length $N$ symbols with no $q$-marks of length greater than or equal to $n$, a $\Pi^{0}_{2}$-sentence $\psi$ of length at most $N+2n+C$ symbols exists which (together with $\mathsf{PRA}$) implies 1-consistency of $\mathsf{PRA}$+$\phi$. The construction of the desired sequence $\{\phi_{n}:n \in \omega\}$ is as follows.

\bigskip

\noindent Define $p(n)$ to be a polynomial with constant term 100, degree at least 2, and such that $p(n)-p(n-1)\geq 3n+C$ for all $n>0$. Select the sentence $\phi_{n}$ as follows, starting from $n=0$ and proceeding by induction on $n$, so assuming at each step that all previous sentences have already been constructed. If we are in the case $n>0$, then consider the set $S$ of all true $\Pi^{0}_{2}$-sentences with more than $p(n-1)$ symbols and at most $p(n-1)+2n+C$ symbols and no $q$-marks of length greater than $n$ which (together with $\mathsf{PRA}$) imply 1-consistency of $\mathsf{PRA}$+$\phi_{n-1}$, otherwise just consider the set $S$ of all true $\Pi^{0}_{2}$-sentences with at most 50 symbols and no $q$-marks; in either case this set is non-empty and finite. Each sentence in $S$ is true in the standard model, and so is associated with a particular general recursive function, of which the sentence asserts totality. The total general recursive functions are partially ordered by comparison of growth rates, so in this way $S$ may be viewed as a partially ordered set.

\bigskip

\noindent Our sentence $\phi_{n}$ will be a conjunction of two sentences, the first one a $\Pi^{0}_{2}$-sentence chosen as follows. We choose our $\Pi^{0}_{2}$-sentence by selecting a maximal element of the partially ordered set $S$ at each stage, also strictly dominating growth-rate-wise, in the case $n>0$, each total general recursive function whose totality is witnessed by a true $\Pi^0_2$-sentence of length at most $q(n)$ (and with $q$-marks of arbitrary length allowed), where $q(n)$ is an appropriately chosen linear polynomial with non-negative coefficients and positive leading coefficient. With the polynomial $q(n)$ fixed, this is indeed possible for an appropriate choice of the polynomial $p(n)$.

\bigskip

\noindent Our second conjunct is chosen (where possible) from the set of true $\Pi^{0}_{1}$-sentences with no $q$-marks taken in lexicographic order, and of length bounded so as to make the length of the conjunction at most $p(n)$. For appropriate choices of all the polynomial bounds involved, this gives rise to a sequence with the desired set of properties. The sequence constructed in this way is clearly arithmetically definable.

\end{proof}

\bigskip

\noindent We present the proof of Proposition \ref{Step1}.

\begin{Proposition} \label{Step1_1} Assume that $\langle \phi, \phi', P, P' \rangle$ is a quadruple as described above and assume that $m, n$ are as above. Let us suppose that $q(n)$ is a fixed polynomial function of $n$ with non-negative integer coefficients. Let us define $S_n$ to be the set of $\Sigma^0_1$-formulas of length at most $q(n)$, with exactly two free variables, which define $\textsf{PRA}$+$\phi'$-provably total general recursive functions. Then, provided the initial polynomial function $q(n)$ is chosen appropriately, there will also always exist at least one formula $\theta_n \in S_n$, obtainable as a polynomial-time-computable function of our quadruple $\langle \phi, \phi', P, P' \rangle$, satisfying all of the following. In the case where $\phi$ and $\phi'$ are true in the standard model, as is the case when $\phi=\phi_n$ and $\phi'=\phi_{n+1}$ where $\phi_n$ and $\phi_{n+1}$ are taken from the sequence constructed in the previous lemma, then $f_n$ will be the total general recursive function corresponding to $\theta_n$. We now state what requirements $\theta_n$ must satisfy.

\bigskip

\noindent (1) It is $\textsf{PRA}+\phi'$-provable that the following holds for all $k$: there is a $\textsf{PRA}+\phi$-proof that there exists an $x$ such that $\theta_n(k,x)$. \newline
(2) The following is $\textsf{PRA}+\phi'$-provable. Denote by $r_n(k)$ the length of the shortest proof from $\textsf{PRA}+\phi$ of the existence of an $x$ such that $\theta_n(k,x)$. For all sufficiently large $k$, the actual halting time of the Turing machine $T_{n}$ for computing the function defined by $\theta_n$, with input $k$, lies between $(r_n(k))^k$ and $(r_n(k))^{3k}$. \newline

\bigskip

\noindent Furthermore, ``polynomial-time computable" in the fourth sentence of the first paragraph of the statement of our Proposition can be replaced by ``computable in $O(g_{k}(n))$ for some $k$", and simultaneously, similarly with other occurrences of ``polynomial", as for example in the second sentence of the first paragraph, where $\langle g_k :k \in \omega \rangle$ is any sequence of functions having the following set of properties: (1) The functions in the sequence should be closed under functional composition (2) they should be dominated growth-rate-wise by the exponential function (this requirement is necessary if we wish to ensure that the sequence of functions $\langle f_n :n \geq N \rangle$ corresponding in the obvious way to the subsequence $\langle \phi_{n} : n \geq N \rangle$ of the sequence $\langle \phi_{n} : n \in \omega \rangle$ of Lemma \ref{Lemma} -- where $N$ is taken as the least $N$ such that $\textsf{PRA}+\phi_{N}$ proves of some Turing machine that it witnesses the polynomial-time-solvability of $\textsf{PROVABILITY}$ with exponent $m$ -- is cofinal growth-rate wise in the set of all total general recursive functions) (3) we require that $n \mapsto \Sigma_{m=1}^{n} \Sigma_{j=1}^{k} a_j g_j(m)$, for arbitrary non-negative integer constants $a_j$, be dominated growth-rate-wise by $g_{k'}$ for some sufficiently large $k'$.

\end{Proposition}

\begin{proof} We must describe how the formula $\theta_n$ with exactly two free variables is obtained as a polynomial-time-computable function of the quadruple $\langle \phi, \phi', P, P' \rangle$. We are only requiring that $\theta_n$ must define a $\textsf{PRA}+\phi'$-provably total general recursive function, as opposed to a total general recursive function in the standard model, so, working in $\textsf{PRA}+\phi'$ and letting $f_n$ denote the total general recursive function defined by the formula (on the assumption of $\textsf{PRA}+\phi'$), we must define an algorithm for computing $f_n(k)$ for a given $k$, such that the description of this algorithm is indeed polynomial-time-computably obtainable from the description of our quadruple and also this algorithm is such that $\textsf{PRA}+\phi'$-provably it halts for all $k$. We shall proceed to describe the algorithm. The value of $n$ will be held fixed in what follows and when we speak of ``polynomially bounded" we will be referring to bounds given by polynomial functions of $k$.

\bigskip

\noindent It does not matter what the algorithm does in the case $k=0$; we can say that it immediately halts and outputs $0$. The discussion that follows will be confined to the case $k>0$.

\bigskip

\noindent \begin{Definition} The decision problem $\textsf{PROVABILITY}$ is defined as in Lemma \ref{decisionproblem}.

\end{Definition}

\noindent Our algorithm shall involve searching through a certain bounded set of indices of Turing machines such that it is $\textsf{PRA}+\phi'$-provable that all Turing machines with indices in the set halt on empty input, and then letting $f_n(k)$ be the maximum halting time for any Turing machine with an index in the set. We shall assume that a certain ``speed-up method" is employed in the algorithm for computing $f_n(k)$, based on the fact that our quadruple includes a proof $P'$ from $\textsf{PRA}+\phi$ of the polynomial-time-computability of the decision problem $\textsf{PROVABILITY}$. To elaborate further, we shall need to specify what it is for a Turing machine to have Property A and Property B, for each $k>0$. Recall that at the start of the statement of our Proposition we made a stipulation about a particular value of $n$ which will be held fixed throughout.

\bigskip

\noindent We shall assume in what follows that $r(k), s(k)$ are two fixed polynomial functions of $k$ with non-negative integer coefficients; we shall say more about how they are to be specified later.

\begin{Definition} Let $\{\chi_r:r \in \omega\}$ be the collection of all the sentences asserting 1-consistency of $\Gamma_{r}+\phi$. The Turing machine $T$ is said to have Property A if there is a proof $p_1$ from $\textsf{PRA}+\{\chi_r:r\in\omega\}$ of length at most $r(k)$ of the existence of a proof $p_2$ from $\textsf{PRA}+\phi$ of arbitrary length of the halting of the Turing machine $T$ when given empty input, where we assume that $T$ is given by a description determined in a natural way from its index $e$, the description's length in symbols being $O(\mathrm{log}_{2} \hspace{0.1 cm} e)$. \end{Definition}

\begin{Definition} Suppose that the Turing machine $T$ has Property A. Then there is a proof $p_1$ from $\textsf{PRA}+\{\chi_r:r\in\omega\}$ as above. Suppose that in addition to this proof, there is a proof $p_3$ from $\textsf{PRA}+\{\chi_r:r\in\omega\}$ of length at most $r(k)$, of the lower and upper bounds stipulated in requirement (2) of the statement of our Proposition, with the Turing machine $T$ with empty input substituted for the Turing machine $T_{n}$ with input $k$ referred to there, and with a pseudo-term for the smallest possible length of the proof $p_2$ substituted for $r_n(k)$, and with the exponent $2k$ substituted for the exponent $3k$. Then the Turing machine $T$ is said to have Property B. (Note that every proof in $\textsf{PRA}+\{\chi_r:r\in\omega\}$ has a corresponding proof of the same conclusion in $\textsf{PRA}+\phi'$ with only a small increase in length.) \end{Definition}

\bigskip

\noindent Recalling that $\textsf{PRA}+\phi'$ is our background theory here, we must establish the claim that when the polynomial function $r(k)$ is suitably chosen, the bounded set of all indices of Turing machines with Property A and Property B for a fixed $k>0$ can be seen to be non-empty (and the universal closure of this statement with the restriction $k>0$ is provable in $\textsf{PRA}+\phi'$). This can be easily inferred from the evident claim that for all natural numbers $r, s$ there is a Turing machine $T$ such that $\textsf{PRA}+\phi$ has shortest proof of halting of $T$ with length $C+\mathrm{log} \hspace{0.1 cm} 2 r+\mathrm{log} \hspace{0.1 cm} 2 s$ and halting time $r^s+C'$ for some constants $C, C'$.

\bigskip

\noindent Now let us continue to specify our algorithm. Our algorithm shall search through the bounded non-empty set of all indices of Turing machines with Property A and Property B, and select the Turing machine in the set with largest possible halting time, as computed in the following way. We ``search through the space of Turing machines" with both Property A and Property B and run them all in sequence, where we assume that we are using the polynomial-time solver for the decision $\textsf{PROVABILITY}$ which $\textsf{PRA}+\phi'$-provably gives correct answers as per the proof $P'$ to speed up the search process. We may in fact have a failure to halt for one of the Turing machines in the set if $\textsf{PRA}+\phi'$ is inconsistent, but in that case we just search for a proof from $\textsf{PRA}+\phi'$ of a contradiction and if one is found simply halt the process and return the Turing machine in the set with smallest possible index. It is easy to see that this Turing machine will have Property $\mathrm{A}^{*}$, defined as with property A but with $s(k)$ substituted for $r(k)$, provided $r(k)$ and $s(k)$ are chosen appropriately. This Turing machine will also have Property $\mathrm{B}^{*}$, similarly defined as with property B but with $s(k)$ substituted for $r(k)$, and with the exponent $3k$ substituted for the exponent $2k$, provided that $k$ is a lot larger than $m$, the exponent for the polynomial time in which the decision problem $\textsf{PROVABILITY}$ is solvable with exponent $m$ via the proof $P'$. To see this, note that this claim as applied to the version of Property $\mathrm{B}^{*}$ involving the upper bound on halting time alone, when $s(k)$ is suitably chosen, is clear, and given suitable choices for $r(k)$ and $s(k)$ a Turing machine will exist with its index in the set of indices of Turing machines in question, with its run-time ``padded out" if need be so that the required lower bound on halting time will hold.

\bigskip

\noindent Note further that if we re-run this entire argument with ``polynomial-time-computable" replaced by ``computable in $O(g_k(n))$ steps for some $k$" as in the final paragraph of the statement of the Proposition, with one of the $g_k$ dominating the exponential function growth-rate-wise, then $f_n(k)$ may be computable by some fixed Turing machine with input $k$ which always halts given arbitrary input provably in $\textsf{PRA}+\phi$, so that the sequence of total general recursive functions $\langle f_n:n \geq N \rangle$ constructed from the sequence $\{\phi_{n}:n \in \omega\}$ of Lemma \ref{Lemma} may fail to be co-final growth-rate-wise in the set of all total general recursive functions. On the other hand in the situation where all the $g_k$ are slower growth-rate-wise than any exponential function this cannot occur and the sequence $\langle f_n:n \geq N \rangle$ will be cofinal growth-rate-wise in the set of all total general recursive functions.

\bigskip

\noindent If we try to relativize to an oracle $A$ while assuming $P^{A}=NP^{A}$, then the attempt to show that Properties $\mathrm{A}^{*}$ and $\mathrm{B}^{*}$ will always hold for the Turing machine with optimal halting time will fail. Because with regard to the proofs considered in the statement of Properties A and B, and Properties $\mathrm{A}^{*}$ and $\mathrm{B}^{*}$, we will basically be speaking of proofs where we're allowed to use any sentence we want from an axiom schema asserting membership or failure of membership in the set $A$, and when we optimize running time over the entire set of Turing machines, we may not be able to polynomially bound how many such axioms we need to use in order to construct our proofs witnessing Properties $\mathrm{A}^{*}$ and $\mathrm{B}^{*}$, from the proofs witnessing Properties A and B for each Turing machine in the set. So this shows why our argument is not a ``relativizing" one.

\bigskip

\noindent The claims made by the Proposition are now all clear.

\end{proof}

\noindent We present the proof of Proposition \ref{Step2}.

\begin{Proposition} \label{Step2_1} Given that the ``Tower of Hanoi" sentences are constructed as in Section 3, we construct a partial general recursive function $g$ in the following way. We search through every possible quadruple $\langle \phi, \phi', P, P' \rangle$ of the form that we have been considering, and given such a quadruple we may construct a ``Tower of Hanoi" sentence $\psi$ corresponding to this quadruple, as described in Section 3, which may be obtained as a polynomial-time function of the quadruple. For every quadruple as specified above with length bounded above by some polynomial function of the argument $k$ to the function $g$, we search either for a refutation from $\textsf{PRA}+\phi$ of the Tower of Hanoi sentence $\psi$, or else a proof in $\textsf{PRA}+\phi+\neg\psi$ of $\Sigma^0_r$-unsoundness, for some $r$, of $\textsf{PRA}+\phi'+\neg\psi$. If this search fails to terminate for a tuple corresponding to a given argument $k$ then we stipulate that $g$ is undefined for that value of $k$. However, we claim that this search always terminates (provably in $\Sigma^0_2$-induction arithmetic) for every tuple with the desired (polynomial-time-checkable) properties, and having performed such a search and brought it to termination for every tuple with length bounded above by the appropriate polynomial bound, we take the maximum length of all proofs found as the outcome of the search over all the tuples, and that is the value of $g(k)$ for this $k$. Our claim, then, is that the function $g$ so defined is provably total in $\Sigma^0_2$-induction arithmetic. \end{Proposition}

\begin{proof} Suppose that the search never terminates. Then $T:=\mathsf{PRA}+\phi+\neg\psi+\{$``$\mathsf{PRA}+\phi'+\neg\psi$ is $\Sigma^0_r$-sound":$r\in\omega\}$ is consistent and $\mathsf{PRA}+\phi+\psi$ is consistent. We are going to be doing model-theoretic reasoning in $\textsf{WKL}_0$ (see \cite{Simpson1999} for a definition of this theory and a discussion of its basic properties), which is adequate for the basic results of model theory including the proof of the completeness theorem and which is $\Pi^0_2$-conservative over $\textsf{PRA}$, as discussed, again, in \cite{Simpson1999}.

\bigskip

\noindent Let $M$ be a model of $T$. $M$ is a model for (schematic) arithmetical soundness of $\mathsf{PRA}+\phi'+\neg\psi$, and $\mathsf{PRA}+\phi'$ predicts (with a proof of standard length) that $\mathsf{PRA}+\phi$ will eventually refute $\psi$, as can be seen from the way that $\psi$ is constructed. The reason for this is that $\mathsf{PRA}+\phi'$ predicts that the Turing machine referred to in the sentence $\psi$ does indeed halt. And, if the Turing machine halts, then clearly one way or another a refutation of $\psi$ from $\mathsf{PRA}+\phi$ will eventually be found, and so $\mathsf{PRA}+\phi'$ makes this prediction. Thus in the model $M$ is a proof $p$ in $\mathsf{PRA}+\phi$ of $\neg\psi$.

\bigskip

\noindent We are going to be speaking of ``standard" and ``non-standard" elements of a model, so it will be desirable to clarify what is meant by ``standard". We will eventually want to claim that all of our reasoning can be formalised in $\mathsf{WKL_0}$ which is $\Pi^0_2$-conservative over $\mathsf{PRA}$, but currently we are assuming for a contradiction the consistency of $T$, and then given that assumption $\mathsf{WKL_0}$ implies the existence of a model $M$ of $T$. There is a cut of such a model -- a $\Sigma^0_1$-definable class relative to $M$, where we should note that in $\textsf{WKL}_0$ we do not have a set existence axiom for arbitrary $\Sigma^0_1$-definable classes, but we can still talk about them meta-linguistically in the usual way -- such that this cut is the least cut that models $\textsf{PRA}$+``$\textsf{PRA}$ is 1-consistent". We can indeed assume that such a cut does exist as a $\Sigma^0_1$-definable class, given that we are assuming that $T$ is consistent, and given how strong $T$ is.

\bigskip

\noindent Elements of $M$ will be said to be standard if they belong to this cut. Thus it will be okay in what follows to assume that the standard fragment of $M$ is closed under Ackermann's function. So, with that understanding of what ``standard" means, could $p$ be a standard element of the model $M$? If that were the case, then with the further assumption that Ackermann's function is total (in the meta-theory, as an extra assumption added to $\textsf{WKL}_0$) we could conclude that the search of which we spoke in the statement of our Proposition terminates. Our final conclusion will be that $\mathsf{WKL_0}$, which is $\Pi^0_2$-conservative over $\mathsf{PRA}$, proves either the termination of the search or the termination of the search on the assumption that Ackermann's function is total. That will be enough for the function $g$ to be provably total from $\Sigma^0_2$-induction arithmetic, and so will establish our claim. Given this consideration, we will be able to assume without loss of generality in what follows that $p$ is a non-standard element of $M$ (since we already see that the desired conclusion is easy to obtain on the assumption that $p$ is standard).

\bigskip

\noindent Denote by $T'$ the theory $\mathsf{PRA}$+$\phi$+$\{$``$\Gamma_{r}+\phi$ is 1-consistent"$:r \in \omega\}$. The least cut $M'$ of $M$ satisfying $T'$ satisfies $\mathsf{PRA}$+$\phi+\neg\psi$ since $\neg\psi$ is $\Pi^0_1$, and it can be seen -- recalling that the functions $f_{n}$ were constructed so that a proof of halting of a Turing machine for $f_{n}(k)$ could always be found in $T'$ in a number of steps bounded above by a polynomial function of $k$ -- that $p$ may be chosen so that $p \in M'$, and we can assume without loss of generality that $p$ is non-standard by the foregoing. Denote by $A$ the (standard) Turing machine which plays in $\psi$ the part that was played by the Turing machine for $f_n(n)$ in $\psi_n$. That is, we are speaking of the Turing machine which computes the appropriate analogue of $f_n(n)$ when given empty input.

\bigskip

\noindent Since $M'$ is a cut of a model of $T$, and $p$ is a non-standard element of $M'$, the halting time of $A$ in $M'$ is non-standard. Then the shortest proof of halting of $A$ from $\mathsf{PRA}$+$\phi$ has non-standard length, and when raised to the power of a sufficiently large standard natural number, it exceeds the halting time for $A$ (as we see when we recall the way in which $A$ was constructed in the proof of the previous Proposition). One can then also conclude that for each standard natural number $k$, the length of the shortest proof from $\mathsf{PRA}+\phi$ of ``$\mathsf{PRA}+\phi$ proves $\mathsf{PRA}+\phi$ proves $\mathsf{PRA}+\phi$ proves... $A$ halts", going $k$ levels deep, is also non-standard. Let us discuss how to argue for this point.

\bigskip

\noindent We note that the shortest proof in $\mathsf{PRA}+\phi$ of ``$A$ halts" is ``close to" the actual halting time of $A$ in the sense that if $a$ is the length of the shortest proof and $h$ is the halting time then $a^{3r} \geq h$ for a standard $r$. It follows from the construction of $A$ that the length of the shortest proof in $\mathsf{PRA}+\phi$ of ``$\mathsf{PRA}+\phi$ proves $A$ halts" must also be non-standard (since the length of this proof exceeds any element of the model that equals the halting time of any standard Turing machine with a standard proof of halting in $\mathsf{PRA}+\phi$). In addition, if $a'$ is the length of the shortest proof of ``$\mathsf{PRA}+\phi$ proves $A$ halts" then again $(a')^{3r} \geq h$ for a sufficiently large standard $r$. Proceeding by induction, if we let $T'$ be the Turing machine which searches for the shortest proof in PRA+$\phi$ of ``$\mathsf{PRA}+\phi$ proves $\mathsf{PRA}+\phi$ proves... $A$ halts", for any finite number of levels deep, the length of the shortest proof of halting of $T'$ is non-standard. We want to argue that given the way the Turing machine $A$ is constructed, this gives rise to a contradiction. The nature of the argument for this point is basically proof-theoretic.

\bigskip

\noindent One can introduce function symbols for every primitive recursive function and also for the following total general recursive functions (relative to our model $M'$ which is a model for $T'$ with no proper cuts which model $T'$): the function $h_0$ whose totality is asserted by $\phi$ and the each of the functions $h_r$ for $r>0$ whose totality is asserted by the sentence ``$\Gamma_{r}$+$\phi$ is 1-consistent". Then one can construct a ``quantifier-free" system in a language with these function symbols, with open formulas as axioms, with the first-order theory $T'$ being $\Pi^0_2$-conservative over this \newline quantifier-free system in an appropriate sense (where here we are dealing entirely with proofs of standard length).

\bigskip

\noindent We can now consider that there will be a closed term of standard length in this language which denotes the halting time of $A$ in every model of $T'$. If we recall the construction of $A$, we see that such a term of standard length can be obtained computationally by searching through the space of proofs in $T'$ of standard length bounded above by a certain fixed standard upper bound. Furthermore, it can be proved in $\mathsf{PRA}$+``$\mathsf{PRA}$ is 1-consistent" -- which we are allowed to use here given the previous discussion of how we are using ``standard" in such a way that it is okay to assume that the standard fragment of any model of $T$ or $T'$ models $\mathsf{PRA}$+``$\mathsf{PRA}$ is 1-consistent" -- that every such term is bounded above (in every model of $T'$) by another term of standard length, where nested occurrences of $h_r$ in the first term are replaced by a single occurrence of $h_s$ for a larger $s$, such that for every occurrence of $h_s$ with $s>0$ in the term, the argument to $h_s$ is a term with the property that it denotes a standard natural number in every model of $T'$. Having noted this, we then see that this term of standard length can in turn be proved in $\mathsf{PRA}$+``$\mathsf{PRA}$ is 1-consistent" to be bounded above (in every model of $T'$) by another term of standard length in which only the primitive recursive functions and $h_0$ occur. From the existence of such a term of standard length one can now obtain the result that there exists some standard natural number $k$ such that $\mathsf{PRA}$+$\phi$ proves $\mathsf{PRA}$+$\phi$ proves... $A$ halts, where we only go $k$ levels deep. But this contradicts the observation previously made.

\bigskip

\noindent We are now at the point where we have derived a contradiction (in $\mathsf{WKL_0}$+``Ackermann's function is total") from supposing that the theory $T$ has a model and also that $\mathsf{PRA}$+$\phi+\psi$ is consistent, but this supposition was seen to be a consequence of the assumption that the search never terminates. Our conclusion is that the search does provably terminate in $\Sigma^0_2$-induction arithmetic. Hence the function $g$ is provably total in $\Sigma^0_2$-induction arithmetic as claimed. This completes the proof of Proposition \ref{Step2_1}. \end{proof}

\begin{Proposition} \label{Step3_1} Given the above assumptions and the previous step, the function $g$ is in fact a total general recursive function growing faster than any $f_n$ in the sequence $\langle f_n : n \geq N \rangle$ discussed at the completion of Step 1, but this sequence of functions is cofinal growth-rate-wise in the set of total general recursive functions, and that's a contradiction. Thus we've derived a contradiction from $P=NP$ in $\Sigma^0_3$-induction arithmetic, therefore we can now conclude that $P \neq NP$ is provable in $\Sigma^0_3$-induction arithmetic and in particular in $\textsf{PA}$, and this completes the proof that $\textsf{PA}$ proves $P \neq NP$. \end{Proposition}

\begin{proof} It is clear from the proof of Proposition \ref{Step1_1} that the sequence of functions $\langle f_n : n \geq N \rangle$ is cofinal growth-rate-wise in the set of total general recursive functions, and it is clear from the construction of the ``Tower of Hanoi" sentences that the function $g$ grows faster than any of them, since the shortest refutation of $\psi_n$ has length $f_n(n)$. Thus our claim follows. \end{proof}

From all of the foregoing discussion we finally have

\begin{theorem} It is provable in $\mathsf{PA}$ that $P \neq NP$. \end{theorem}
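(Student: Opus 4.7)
The plan is to assemble the four ingredients of the previous sections into a single reductio argument formalised in $\mathsf{PA}$. First I would assume, toward a contradiction, that $P=NP$, so that in particular there is a fixed polynomial-time solver for $\textsf{SAT}$ with some exponent $m$. By Lemma \ref{Lemma} the arithmetically definable sequence $\langle \phi_n : n \in \omega \rangle$ of true $\Pi^0_2$-sentences is available inside $\mathsf{PA}$, together with the key properties that $\mathsf{PRA}+\phi_{n+1}$ proves the $1$-consistency of $\mathsf{PRA}+\phi_n$ with short proofs, and that the provably total recursive functions of $\mathsf{PRA}+\{\phi_n\}$ are cofinal growth-rate-wise in the total recursive functions. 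Since each $\phi_n$ is true and in particular extends $\mathsf{PRA}$ conservatively in the relevant way, there is an $N$ such that for all $n \geq N$, a fixed $\Pi^0_1$-sentence witnessing $P=NP$ with exponent $m$ is provable from $\mathsf{PRA}+\phi_n$.

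Next I would invoke Propositions \ref{stepone_1}--\ref{steponeb} (Step 1) to extract the sequence $\langle f_n : n \geq N \rangle$ of total general recursive functions, each defined by a $\Sigma^0_1$-formula obtained as a polynomial-time function of the corresponding quadruple $\langle \phi_n, \phi_{n+1}, P_n, P'_n \rangle$, and satisfying requirements (1)--(3) of Proposition \ref{stepone_1} for the chosen constants $K$ and $\beta$. The cofinality in growth rate of this sequence follows because each $f_n$ is provably total in $\mathsf{PRA}+\phi_{n+1}$ but not in $\mathsf{PRA}+\phi_n$ (this uses requirement (2) of Proposition \ref{steponeb}, i.e.\ that $g_k(n)=n^k$ is dominated by the exponential), combined with the cofinality clause of Lemma \ref{Lemma}.

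Then I would apply Step 2: for each $n \geq N$ diagonalise to form the ``Tower of Hanoi'' sentence $\psi_n$, whose length is polynomial in $n$, and use Proposition \ref{existenceofg} to produce the partial recursive function $g$ whose value at $n$ is the maximum length of the shortest refutation (or proof of $\Sigma^0_r$-unsoundness) found across all admissible quadruples of polynomially bounded length. Proposition \ref{existenceofg} provides $\Sigma^0_2$-provable totality of $g$ in $\mathsf{PA}$, once $K$ and $\beta$ have been fixed so that the auxiliary Turing machine $T'$ appearing in the model-theoretic argument has its index within the allowed bound.

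Finally I would apply Proposition \ref{stepthree} (Step 3), working inside $\Sigma^0_3$-induction arithmetic: the assumption $P=NP$ together with the construction of $\psi_n$ forces $g(n) \geq f_n(n)$ for all sufficiently large $n$, so $g$ eventually dominates every $f_n$; but the $\langle f_n \rangle$ are cofinal in growth rate among total recursive functions, contradicting the totality of $g$. Since the whole argument has been carried out in $\mathsf{PA}$ (indeed in $\Sigma^0_3$-induction arithmetic, with the model-theoretic portion absorbed via the $\Pi^0_2$-conservativity of $\mathsf{WKL}_0$ over $\mathsf{PRA}$), we conclude $\mathsf{PA} \vdash P \neq NP$. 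The main obstacle I anticipate, already flagged in the earlier sections, is verifying that the constants $K$ and $\beta$ chosen to make the Step 2 model-theoretic argument go through are simultaneously compatible with requirement (3) of Proposition \ref{stepone_1} used to construct the $f_n$ in Step 1; this is the one place where all the quantitative bounds across steps must be reconciled uniformly in $n$.
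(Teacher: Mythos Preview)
Your proposal is correct and follows essentially the same route as the paper: the theorem is obtained by chaining together Lemma \ref{Lemma}, Propositions \ref{stepone_1}--\ref{steponeb} (Step 1), Proposition \ref{existenceofg} (Step 2), and Proposition \ref{stepthree} (Step 3) into a reductio inside $\Sigma^0_3$-induction arithmetic, with the $\mathsf{WKL}_0$ model-theoretic portion handled via $\Pi^0_2$-conservativity over $\mathsf{PRA}$. The paper's own proof of the theorem is in fact nothing more than the line ``From all of the foregoing discussion we finally have\ldots'', so your write-up is, if anything, a more explicit rendering of exactly the assembly the paper intends.
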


\begin{proof} Since we have now derived a contradiction in $\textsf{PA}$ from the assumption that $P=NP$, this completes the proof in $\textsf{PA}$ that $P \neq NP$. \end{proof}

\section{Showing that the argument works in $\textsf{SEFA}$}

\noindent We have completed the proof in $\textsf{PA}$ that $P \neq NP$. We shall describe how to adapt this proof to show that $\Gamma_5$ proves $P \neq NP$, and that the total general recursive function witnessing the truth of the $\Pi^0_2$-statement $P \neq NP$ is elementary recursive. We will conclude by showing how the argument can in fact be adjusted to show that $\Gamma_4$, that is $\textsf{SEFA}$, proves $P \neq NP$.

\bigskip

\noindent We begin by observing that when we used the theories $\Gamma_r$ in our construction of $f_n$ we could have replaced them by $I\Delta_0+\Omega_r$.

\bigskip

\noindent These theories in versions that use cut-free axiomatizations of first-order logic are all provably 1-consistent in $\Gamma_3$ (and the universally quantified statement asserting that all of them are 1-consistent is provable in $\Gamma_3$). From this consideration, combined with considerations of the speed at which proofs can be found of the form searched for in the algorithm for the function $g$, as can be observed from examining the proof of Proposition \ref{Step2_1}, we can in fact see that the version of the function $g$ defined this way would be elementary recursive.

\bigskip

\noindent We could also have replaced every occurrence of $\textsf{PRA}$ with $\textsf{EFA}$ (in a suitably extended language with all the appropriate constant symbols, that is), while still having $g$ come out elementary recursive. There is a weak version of $\textsf{WKL}_0$ called $\textsf{WKL}^{*}_0$ which is $\Pi^0_2$-conservative over $\textsf{EFA}$, which is presented in Chapter X of \cite{Simpson1999}, and which is sufficiently strong to produce all the model-theoretic results that we needed in the previous section. We could have have used $\textsf{WKL}^{*}_0+\Gamma_5$ which is $\Pi^0_2$-conservative over $\Gamma_5$ to construct a model of $\textsf{EFA}$ which would serve for the construction of the sequence $\langle \phi_n : n \in \omega \rangle$, (these sentences being required to be true in this model, which would be necessarily non-standard), and the principle of induction for sets defined from this sequence using set existence axioms allowable in $\textsf{WKL}^{*}_0$ would be permitted in $\textsf{WKL}^{*}_0+\Gamma_5$, which again is $\Pi^0_2$-conservative over $\Gamma_5$. We require adding $\Gamma_5$ to $\textsf{WKL}^{*}_0$ in order to be able to prove 1-consistency of $\textsf{EFA}$. Thus we end up with the conclusion that the $\Pi^0_2$-sentence $P \neq NP$ is provable in $\Gamma_5$ and the total general recursive function witnessing its truth is elementary recursive, as claimed.

\bigskip

\noindent We can in fact pull the argument all the way down into $\textsf{SEFA}$. To achieve this, substitute $\textsf{EFA}$ everywhere with $I\Delta_0+\{\Omega_r : r \in \omega\}$, so that the 1-consistency claim for this theory is provable in $\textsf{SEFA}$. In this way we can dispense with the assumption of $\Gamma_5$ and replace it with $\Gamma_4$. Thus our final conclusion is that $\textsf{SEFA}$ proves $P \neq NP$.

\end{document}